\numberwithin{equation}{section}
\def\R{{\bf R}}
\def\d{\displaystyle}
\def\e{{\varepsilon}}
\def\wt{\widetilde}
\newtheorem{thm}{Theorem}[section]
\newtheorem{lem}{Lemma}[section]
\newtheorem{rem}{Remark}[section]
\newtheorem{proof}{Proof}[section]
\begin{document}

\begin{frontmatter}

\title{Weighted $L^2-L^2$ estimate for wave equation and its applications}

\author[mymainaddress,secondaryaddress]{Ning-An Lai\corref{mycorrespondingauthor}}
\cortext[mycorrespondingauthor]{Corresponding Author}
\ead{hyayue@gmail.com}

\address[mymainaddress]{Institute of Nonlinear Analysis and Department of Mathematics,\\ Lishui University, Lishui 323000, China}
\address[secondaryaddress]{School of Mathematical Sciences, Fudan University, Shanghai 200433, China}

\begin{abstract}

In this work we establish a weighted $L^2-L^2$ estimate for inhomogeneous wave equation in 3-D, by introducing a Morawetz multiplier with weight of power $s(1<s<2)$,
and then integrating on the light cones and $t$ slice. With this weighted $L^2-L^2$ estimate in hand, we may give a new proof of global existence
for small data Cauchy problem of semilinear wave equation with supercritical power in 3-D. What is more, by combining the Huygens' principle for wave equations in 3-D, the global existence for semilinear wave equation with scale invariant damping in 3-D is established.

\end{abstract}

\begin{keyword}
Weighted $L^2-L^2$ estimate; wave equation; global existence; semilinear.

\MSC[2010] 35L71, 35B44

\end{keyword}

\end{frontmatter}


\section{Introduction}
\par\quad
In this paper, we are devoted to establishing a weighted $L^2-L^2$ estimate for the inhomogeneous wave equation with zero initial data in 3-D
\begin{equation}
\label{equationnodata}
\partial_t^2\phi-\Delta \phi=F(t, x),~~(t, x)\in \R_+\times \R^3.\\
\end{equation}
For this purpose, we introduce a Morawetz multiplier, which is showed in \eqref{multiplier} below, with a power $s$ in the weight varying from 1 to 2. After making computation in the polar coordinates, we integrate the resulted equality on the time slice and the light cones. Finally, by combining Hardy type inequalities, we establish the weighted $L^2-L^2$ estimate for the solution
of \eqref{equationnodata}.

As one application, we give a new proof of global existence for small data Cauchy problem of semilinear wave equation
\begin{equation}
\label{semiequation}
\partial_t^2\phi-\Delta \phi=|\phi|^p,~~(t, x)\in \R_+\times \R^3,\\
\end{equation}
for $1+\sqrt 2<p<3$.
\begin{rem}
In \cite{LZ}, the author and Zhou used the multiplier \eqref{multiplier} with $s=1$ to establish a similar weighted $L^2-L^2$
estimate for the inhomogeneous wave equation, and furthermore demonstrated an elementary proof of global existence for small data Cauchy proof \eqref{semiequation} in high dimensions$(n\geq 4)$.
\end{rem}
There have been a long history for the study of the large time behavior of the small data Cauchy problem of the following semilinear equation in all dimensions
\begin{equation}
\label{ndsemiequation}
\partial_t^2\phi-\Delta \phi=|\phi|^p,~~(t, x)\in \R_+\times \R^n.\\
\end{equation}
The pioneering work is due to John \cite{John}, in which he proved global existence for the Cauchy problem \eqref{semiequation}
with smooth data for $p>1+\sqrt 2$ and blow-up for $1<p<\sqrt 2$ with appropriate assumption on the data. Then Strauss \cite{Strauss} announced a critical power $p_c(n)(n\geq 2)$ for the small data Cauchy problem of \eqref{ndsemiequation}, which is the positive root of the following quadratic equation
\[
(n-1)p^2-(n+1)p-2=0.
\]
Here critical power means that if $p\in (1, p_c(n)]$ there will be no global solutions while if $p\in (p_c(n), \infty)$ the solutions exist globally in time. This conjecture has been completely solved recently, we refer the reader to
\cite{Glassey1, Glassey2, Sideris, Rammaha, Schaeffer, Zhou1, Georgive, Takamura, Yordanov, Zhou2} and references therein.
The key point for global existence in \cite{John}, John established the following weighted pointwise estimate for the solution to \eqref{equationnodata} with zero data
\begin{equation}
\begin{aligned}\label{johnpointwise}
\left\|t(t-r)^{p-2}\phi\right\|_{L^{\infty}(\R_+^{1+3})}\leq C_p\left\|t^p(t-r)^{p(p-2)}F\right\|_{L^{\infty}(\R_+^{1+3})},
\end{aligned}
\end{equation}
for $1+\sqrt 2<p\leq 3$, where $r=|x|$ and $C_p$ denotes a constant depending on $p$. Lindblad and Sogge \cite{LS}
established an analogy of \eqref{johnpointwise} without the weight $|t-r|$, which involves the $L^p$ norm of the radial variables,
and showed a somewhat simpler proof. We point out that both the methods in \cite{John} and \cite{LS} heavily rely on the explicit expression of the solution of the linear wave equation in 3-D. Not much later, Georgiev, Lindblad and Sogge \cite{Georgive} established a weighted Strichartz estimate with simpler weights which are invariant under Lorentz rotations, and can give an elementary proof of global existence for small data Cauchy problem \eqref{semiequation} with $p> 1+\sqrt 2$. In this work, based on the weighted $L^2-L^2$ estimate, we are going to establish another analogy of \eqref{johnpointwise}, which includes the weights $|t+r|$ and $|t-r|$, and $L^2$ normal of the angular variable. Then we may show global existence for the small data Cauchy problem \eqref{semiequation} for $1+\sqrt 2<p<3$, avoiding to use the explicit formula of the solution.

Another application of our weighted $L^2-L^2$ estimate is to prove the global existence for small data Cauchy problem of semilinear wave equation with scale invariant damping in 3-D
\begin{equation}
\begin{aligned}\label{damped}
\partial_t^2\Phi-\Delta \Phi+\frac{2}{1+t}\Phi_t=|\Phi|^p,~~(t, x)\in \R_+\times \R^3.\\
\end{aligned}
\end{equation}
Here, scale invariant means that, under the following scaling
\[
\wt{\Phi}(t, x):=\Phi(\sigma(1+t)-1, \sigma x),\ \sigma>0,
\]
the equation
\begin{equation}
\begin{aligned}\label{nddamped}
\partial_t^2\Phi-\Delta \Phi+\frac{2}{1+t}\Phi_t=|\Phi|^p,~~(t, x)\in \R_+\times \R^n\\
\end{aligned}
\end{equation}
remains invariant. By using the the so-called Liouville transformation
\begin{equation}\nonumber
\begin{aligned}
\phi(t, x):=(1+t)\Phi(t, x),
\end{aligned}
\end{equation}
equation \eqref{nddamped} will be changed into
\begin{equation}
\begin{aligned}\label{ndnodamped}
\partial_t^2\phi-\Delta \phi=\frac{|\phi|^p}{(1+t)^{p-1}},~~(t, x)\in \R_+\times \R^n.\\
\end{aligned}
\end{equation}
For the small data Cauchy problem \eqref{nddamped}, or equivalently, \eqref{ndnodamped}, we know it admits a critical power
$p=p_c(n+2)$, due to the known results. We refer the reader to \cite{wak16, DLR15, D-L, Pa} and references therein. However, the global existence results in high dimensions$(p>p_c(n+2), n\geq 3)$ have only been established under symmetrical assumption. See \cite{DLR15} for $n=3$, \cite{D-L} for odd and higher dimensions$(n\geq 5)$ and \cite{Pa} for $n\geq 4$. We aim to prove global existence to the small data Cauchy problem \eqref{damped} for $p>p_c(5)$ without radial symmetrical assumption. However, compared to the Cauchy problem of \eqref{semiequation}, we have two difficulties. Firstly, we may take derivative to the equation \eqref{damped} only one time at most, due to the fact $1<p_c(5)<2$. Secondly, an additional weight will appear in the nonlinear term after making the Liouville transformation. We overcome the first difficulty by establishing a new weighted estimate to the solution, based on Sobolev embedding and our weighted $L^2-L^2$ estimate, while the second one is overcame by using the Strong Huygens' principle for wave equations in 3-D. We demonstrate the details in section 5.

\begin{rem}
For the small data Cauchy problem of
\begin{equation}
\begin{aligned}\label{nddamped1}
\partial_t^2\Phi-\Delta \Phi+\frac{\mu}{1+t}\Phi_t=|\Phi|^p,~~(t, x)\in \R_+\times \R^n,\\
\end{aligned}
\end{equation}
it is conjectured that the critical power is $p=p_c(n+\mu)$. There are some partial results on blow-up and lifespan estimate,
see \cite{IKTW, KS, KTW, LTW, IS, Tu} and references therein.
\end{rem}

\section{Weighted $L^2-L^2$ estimate}
\par
In this section we first establish the weighted $L^2-L^2$ estimate for the inhomogeneous wave equation with zero initial data.
\begin{thm}
\label{weighted}
Consider the following Cauchy problem of wave equation
\begin{equation}
\label{linearequation}
\left\{
\begin{array}{l}
\d \partial_{t}^2\phi-\Delta \phi=F(t, x),
\quad (t, x)~ \mbox{in}\ ~\R_{+}\times \R^3,\\
\phi(0, x)=\phi_t(0, x)=0, \quad x\in\R^3.
\end{array}
\right.
\end{equation}
Suppose that
\[
supp~ F\subset \{|x|\le t+1\},
\]
then we have
\begin{equation}
\begin{aligned}\label{Weightedinequality}
&\sup_{t}\Big(\left\|(t+2-r)^{s/2}\nabla_{t, x}\phi\right\|_{L^2(\R^3)}+\left\|(t+2-r)^{s/2}\left(\frac{\phi}{r}\right)\right\|_{L^2(\R^3)}\\
&+\left\|(t+2-r)^{s/2}\left(\frac{\nabla_{S^2}\phi}{r}\right)\right\|_{L^2(\R^3)}\Big)\\
\leq& C_{\delta}\left\|(t+2+r)^{s/2}(t+r-2)^{1/2+\delta}F\right\|_{L^2(\R_{+}\times \R^3)},
\end{aligned}
\end{equation}
for any constant $\delta>0$ and $1<s<2$. And $C_{\delta}$ denotes a positive constant depending on $\delta$.
\end{thm}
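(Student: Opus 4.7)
The plan is to work in spherical coordinates $(t,r,\omega)\in\R_+\times\R_+\times S^2$ with the substitution $u=r\phi$, which converts \eqref{linearequation} into the modified $1{+}1$-dimensional wave equation
\begin{equation*}
u_{tt}-u_{rr}=\frac{\Delta_{S^2}u}{r^2}+rF
\end{equation*}
and renders $\|\phi\|_{L^2(\R^3)}^2=\int_0^\infty\int_{S^2}u^2\,d\omega\,dr$. Finite speed of propagation applied to $\mathrm{supp}\,F\subset\{r\le t+1\}$ yields $\mathrm{supp}\,\phi(t,\cdot)\subset\{r\le t+1\}$, so all boundary contributions at $r=\infty$ are trivial. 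The key weight identity, in the null derivatives $\partial_\pm=\partial_t\pm\partial_r$, is $\partial_-[(t+2-r)^s]=2s(t+2-r)^{s-1}>0$ whenever $s>1$.

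Next I would contract the equation against the Morawetz-type multiplier $M\phi$ displayed in \eqref{multiplier}, which in view of the context I expect to be a weighted version of the classical $3$D conformal Morawetz multiplier, namely
\begin{equation*}
M\phi=(t+2-r)^s\bigl(\phi_t+\phi_r+\phi/r\bigr)
\end{equation*}
(equivalently, $(t+2-r)^s\partial_+u/r$). Expanding $M\phi\cdot\Box\phi$ and rearranging should produce a pointwise divergence identity of the schematic form $M\phi\cdot\Box\phi=\partial_t P-\mathrm{div}_x Q+\mathcal{B}$, where $P$ is a positive-definite quadratic in $(\phi_t,\nabla\phi,\phi/r)$ weighted by $(t+2-r)^s$, $Q$ is a spatial flux whose trace on outgoing light cones has a favorable sign, and the bulk $\mathcal{B}\ge0$ is a positive multiple of $(t+2-r)^{s-1}$ times the null-good quadratic $(\phi_t+\phi_r)^2+|\nabla_{S^2}\phi|^2/r^2$; the non-negativity of $\mathcal B$ is exactly the reason for requiring $s>1$.

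I would then integrate this identity over the slab $\{0\le t\le T\}\times\R^3$ and, separately, along the forward light cone $\{r=t+1\}$. The $t=0$ boundary vanishes by zero Cauchy data, $r=\infty$ by support, and the apex $r=0$ by the $u=r\phi$ factor; the light-cone flux $Q\cdot n$ is non-negative. What survives is the square of the left-hand side of \eqref{Weightedinequality} on the $t=T$ slice, plus non-negative cone flux and bulk, all controlled above by $\int_0^\infty\!\int_{\R^3}(M\phi)\,F\,dx\,dt$. To bound this source integral I would apply a weighted Cauchy--Schwarz of the form
\begin{equation*}
(t+2-r)^s|M\phi||F|\le\bigl[(t+2-r)^{(s-1)/2}(t+r-2)^{-1/2-\delta}|M\phi|\bigr]\cdot\bigl[(t+r+2)^{(s+1)/2}(t+r-2)^{1/2+\delta}|F|\bigr],
\end{equation*}
using $(t+2-r)\le(t+r+2)$ in the support. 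The second factor is, up to absorbable powers of $r$, the right-hand side of \eqref{Weightedinequality}; the first factor is absorbed into the non-negative bulk $\mathcal{B}$ via an interpolation available precisely because $1<s<2$ and $\delta>0$. Finally, the bounds on $(t+2-r)^{s/2}\phi/r$ and $(t+2-r)^{s/2}\nabla_{S^2}\phi/r$ follow from the gradient bound by a standard weighted Hardy inequality in the radial variable, whose hypotheses are met since the weight $(t+2-r)^{s/2}$ has controlled logarithmic derivative.

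The main obstacle I expect is choosing the multiplier so that after the integrations by parts both (i) the bulk $\mathcal{B}$ is genuinely non-negative, forcing $s>1$, and (ii) the Cauchy--Schwarz splitting above is compatible with the precise weights $(t+r+2)^{s/2}(t+r-2)^{1/2+\delta}$ appearing in \eqref{Weightedinequality}, forcing $s<2$ and the margin $\delta>0$. Getting both simultaneously is the structural core of the argument; everything else reduces to careful bookkeeping of the various boundary contributions and the final Hardy step.
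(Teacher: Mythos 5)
Your proposed multiplier is not the one that makes the argument work, and the sign of the resulting bulk term is fatal. Writing $w=r\phi$, $L=\partial_t+\partial_r$, $\underline L=\partial_t-\partial_r$, your choice $(t+2-r)^s(\phi_t+\phi_r+\phi/r)=(t+2-r)^sLw/r$ leads to the identity $2(t+2-r)^s(Lw)\,\underline LLw=\underline L\bigl[(t+2-r)^s(Lw)^2\bigr]-2s(t+2-r)^{s-1}(Lw)^2$, so after integrating over the slab one gets $\mathrm{Flux}(T)=\iint 2s(t+2-r)^{s-1}(Lw)^2+\iint(\cdots)F$: the non-negative bulk sits on the \emph{same side as the source} and inflates the flux instead of being available for absorption. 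Your ``key identity'' $\underline L[(t+2-r)^s]>0$ (true for every $s>0$, not just $s>1$) is precisely the problem, not the solution; and since the ratio of bulk to flux is $1/(t+2-r)$, which is not integrable along outgoing rays, no Gronwall-type repair is possible either. The paper's multiplier \eqref{multiplier} is instead the \emph{sum} $(t+2+r)^s(\partial_t+\partial_r+\frac1r)+(t+2-r)^s(\partial_t-\partial_r-\frac1r)$, chosen so that each weight is annihilated by the transversal null derivative ($(\partial_t-\partial_r)(t+2+r)^s=0$ and $(\partial_t+\partial_r)(t+2-r)^s=0$); then no quadratic bulk in the null derivatives appears at all, the only bulk is the angular one, and its non-negativity is the elementary inequality \eqref{taylor} --- which is where $1<s<2$ genuinely enters.

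You are also missing the mechanism that produces the weight $(t+r-2)^{1/2+\delta}$ on $F$. The paper integrates the divergence identity not only on time slices but on the two null foliations $t-r=u$ and $t+r=\underline u$, obtaining $\sup_u$ and $\sup_{\underline u}$ control of the corresponding cone fluxes; the source term is then estimated by Cauchy--Schwarz \emph{in the foliation parameter}, pulling out the sup of the cone flux and paying a factor $(t+2-r)^{1/2+\delta}$ (resp.\ $(t+2+r)^{1/2+\delta}$) to make the $du$ (resp.\ $d\underline u$) integral converge; the constraint $1+2\delta\le s$ then merges everything into the single weight $(t+2+r)^{s/2}(t+2-r)^{1/2+\delta}$. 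Integrating only over the boundary cone $r=t+1$, as you suggest, does not produce these sup-over-cones quantities, and your slab Cauchy--Schwarz would require an integrated local-energy-decay bound that nothing in your scheme supplies. Finally, the term $(t+2-r)^{s/2}\nabla_{S^2}\phi/r$ comes out of the multiplier identity directly as part of the slice energy; the Hardy inequalities of Lemma \ref{hardy} are used only to split $(\phi_r+\phi/r)^2$ into $\phi_r^2$ and $(\phi/r)^2$, not to recover the angular gradient.
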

\begin{proof}
We introduce the following multiplier
\begin{equation}
\begin{aligned}\label{multiplier}
X=&(t+2+r)^{s}(\partial_t+\partial_r)+(t+2-r)^{s}(\partial_t-\partial_r)\\
&+\frac1r\left((t+2+r)^s-(t+2-r)^s\right)\\
=&(t+2+r)^{s}\left(\partial_t+\partial_r+\frac1r\right)+(t+2-r)^{s}\left(\partial_t-\partial_r-\frac1r\right).
\end{aligned}
\end{equation}
Multiplying the equation in problem \eqref{linearequation} with the above multiplier, we have
\[
X\phi(\partial_t^2\phi-\Delta \phi)r^2=X\phi Fr^2.
\]

Next we shall compute the left side of the above equality in polar coordinate in several steps. First, we have
\begin{equation}
\begin{aligned}\label{firstpart}
&(t+2+r)^{s}(\phi_t+\phi_r)r^2\Box\phi\\
=&(\partial_t-\partial_r)\left[\frac12(t+2+r)^sr^2(\phi_t+\phi_r)^2\right]+r(t+2+r)^s(\phi_t^2-\phi_r^2)\\
&+(\partial_t+\partial_r)\left[\frac12(t+2+r)^s(\nabla_{S^2}\phi)^2\right]-s(t+2+r)^{s-1}(\nabla_{S^2}\phi)^2\\
&-\nabla_{S^2}\left[(t+2+r)^s(\phi_t+\phi_r)\nabla_{S^2}\phi\right],
\end{aligned}
\end{equation}
where
\[
(\nabla_{S^2}\phi)^2=\left(\partial_{\theta}\phi\right)^2+\frac{1}{\sin^2\theta}(\partial_{\varphi}\phi)^2
\]
if we use the following polar coordinates
\begin{equation}
\nonumber
\left\{
\begin{array}{l}
\d x=r\sin\theta\cos\varphi,\\
y=r\sin\theta\sin\varphi,\\
z=r\cos\theta.
\end{array}
\right.
\end{equation}
In a similar way, we may compute the second part of the multiplier as
\begin{equation}
\begin{aligned}\label{secondpart}
&(t+2-r)^{s}(\phi_t-\phi_r)r^2\Box\phi\\
=&(\partial_t+\partial_r)\left[\frac12(t+2-r)^sr^2(\phi_t-\phi_r)^2\right]-r(t+2-r)^s(\phi_t^2-\phi_r^2)\\
&+(\partial_t-\partial_r)\left[\frac12(t+2-r)^s(\nabla_{S^2}\phi)^2\right]-s(t+2-r)^{s-1}(\nabla_{S^2}\phi)^2\\
&-\nabla_{S^2}\left[(t+2-r)^s(\phi_t-\phi_r)\nabla_{S^2}\phi\right].
\end{aligned}
\end{equation}
For the third part of the multiplier we have
\begin{equation}
\begin{aligned}\label{thirdpart}
&\left[(t+2+r)^s-(t+2-r)^s\right]\phi r\Box \phi\\
=&\left[(t+2+r)^s-(t+2-r)^s\right]r\left\{\Box\frac{\phi^2}{2}-\left[\phi_t^2-(\nabla_x\phi)^2\right]\right\}\\
=&(\partial_t-\partial_r)\left[(t+2+r)^s(\partial_t+\partial_r)\left(\frac{r\phi^2}{2}\right)\right]\\
&-(\partial_t+\partial_r)
\left[(t+2-r)^s(\partial_t-\partial_r)\left(\frac{r\phi^2}{2}\right)\right]\\
&-\left[(t+2+r)^s-(t+2-r)^s\right]r(\phi_t^2-\phi_r^2)\\
&+\frac1r\left[(t+2+r)^s-(t+2-r)^s\right](\nabla_{S^2}\phi)^2\\
&-\nabla_{S^2}\left\{\frac1r\left[(t+2+r)^s-(t+2-r)^s\right]\nabla_{S^2}\left(\frac{\phi^2}{2}\right)\right\}.
\end{aligned}
\end{equation}
Adding the three parts, i.e. \eqref{firstpart}-\eqref{thirdpart}, together, we have
\begin{equation}
\begin{aligned}\label{threeparts}
&(\partial_t-\partial_r)\Big\{\frac12(t+2+r)^sr^2(\phi_t+\phi_r)^2\\
+&(t+2+r)^s\frac{\phi^2}{2}+(t+2+r)^sr\phi(\phi_t+\phi_r)
+\frac12(t+2-r)^s(\nabla_{S^2}\phi)^2\Big\}\\
+&(\partial_t+\partial_r)\Big\{\frac12(t+2-r)^sr^2(\phi_t-\phi_r)^2\\
+&(t+2-r)^s\frac{\phi^2}{2}-(t+2-r)^sr\phi(\phi_t-\phi_r)
+\frac12(t+2+r)^s(\nabla_{S^2}\phi)^2\Big\}\\
+&\frac1r\left[(t+2+r)^s-(t+2-r)^s-sr(t+2+r)^{s-1}-sr(t+2-r)^{s-1}\right]\\
\times&(\nabla_{S^2}\phi)^2-\nabla_{S^2}\left(X\phi\nabla_{S^2}\phi\right)\\
=&(\partial_t-\partial_r)\left[\frac{(t+2+r)^s}{2}r^2\left(\phi_t+\phi_r+\frac{\phi}{r}\right)^2+\frac
{(t+2-r)^s}{2}(\nabla_{S^2}\phi)^2\right]\\
+&(\partial_t+\partial_r)\left[\frac{(t+2-r)^s}{2}r^2\left(\phi_t-\phi_r-\frac{\phi}{r}\right)^2+\frac
{(t+2+r)^s}{2}(\nabla_{S^2}\phi)^2\right]\\
+&\frac1r\left[(t+2-(s-1)r)(t+2+r)^{s-1}-(t+2-r)^{s-1}(t+2+(s-1)r)\right]\\
\times&(\nabla_{S^2}\phi)^2-\nabla_{S^2}\left(X\phi\nabla_{S^2}\phi\right)\\
=&X\phi Fr^2.\\
\end{aligned}
\end{equation}
It is easy to see that
\[
\begin{aligned}
&(t+2-(s-1)r)(t+2+r)^{s-1}\\
=&(t+2)^s\left(1-(s-1)\frac{r}{t+2}\right)\left(1+\frac{r}{t+2}\right)^{s-1}.
\end{aligned}
\]
By Taylor expansion we may verify that for $1<s<2$ and $r\leq t+1$
\[
\begin{aligned}
&\left(1-(s-1)\frac{r}{t+2}\right)\left(1+\frac{r}{t+2}\right)^{s-1}\\
\geq& \left(1-\frac{r}{t+2}\right)^{s-1}\left(1+(s-1)\frac{r}{t+2}\right),
\end{aligned}
\]
and hence
\begin{equation}
\label{taylor}
\left(t+2-(s-1)r\right)\left(t+2+r\right)^{s-1}\geq \left(t+2-r\right)^{s-1}\left(t+2+(s-1)r\right).
\end{equation}
Integrating \eqref{threeparts} on the time slice $t=constant$, the lightcone $u=t-r=constant$ and $\underline{u}=t+r=constant$, respectively, and noting the inequality \eqref{taylor}, we have
\begin{equation}
\begin{aligned}\label{integrating}
&\sup_{t}\int_{\R^3}\Big\{(t+2+r)^sr^2\left(\phi_t+\phi_r+\frac{\phi}{r}\right)^2+(t+2-r)^s\left(\phi_t-\phi_r-\frac{\phi}{r}\right)^2\\
&+\left[(t+2+r)^s+(t+2-r)^s\right]\frac{(\nabla_{S^2}\phi)^2}{r^2}\Big\}dx\\
&+\sup_u\int_{t-r=u}\int_{S^2}(t+2+r)^sr^2\left(\phi_t+\phi_r+\frac{\phi}{r}\right)^2d\omega dr\\
&+\sup_{\underline{u}}\int_{t+r=\underline{u}}\int_{S^2}(t+2-r)^sr^2\left(\phi_t-\phi_r-\frac{\phi}{r}\right)^2d\omega dr\\
&\leq\int\int\int|X\phi||F|r^2drd\omega dt.\\
\end{aligned}
\end{equation}
We are going to estimate the right hand side. Recalling the definition of the multiplier $X$ in \eqref{multiplier}, we have
\begin{equation}
\begin{aligned}\label{right}
&\int\int\int|X\phi||F|r^2drd\omega dt\\
\leq& \int\int\int(t+2+r)^s\left|\phi_t+\phi_r+\frac{\phi}{r}\right||F|r^2drd\omega dt\\
&+\int\int\int(t+2-r)^s\left|\phi_t-\phi_r-\frac{\phi}{r}\right||F|r^2drd\omega dt\\
&\triangleq I_1+I_2.\\
\end{aligned}
\end{equation}
For $I_1$, by H\"{o}lder inequality we obtain for any $\delta>0$
\begin{equation}
\begin{aligned}\label{I1}
I_1=&\int\int\int(t+2+r)^s\left|\phi_t+\phi_r+\frac{\phi}{r}\right||F|r^2drd\omega du\\
\leq&\sup_{u}\left(\int\int_{t-r=u}(t+2+r)^s\left(\phi_t+\phi_r+\frac{\phi}{r}\right)^2r^2drd\omega\right)^{\frac12}\\
&\times\int\left(\int\int_{t-r=u}(t+2+r)^sF^2r^2drd\omega\right)^{\frac12}du\\
\leq&\left(\sup_{u}\int\int_{t-r=u}(t+2+r)^s\left(\phi_t+\phi_r+\frac{\phi}{r}\right)^2r^2drd\omega\right)^{\frac12}\\
&\times \left(\int\int\int(t+2-r)^{1+2\delta}(t+2+r)^sF^2r^2drd\omega du\right)^{\frac12}.
\end{aligned}
\end{equation}
In a similar way, we may estimate the term $I_2$ to get
\begin{equation}
\begin{aligned}\label{I2}
I_2
\leq&\left(\sup_{\underline{u}}\int\int_{t+r=\underline{u}}(t+2-r)^s\left(\phi_t-\phi_r-\frac{\phi}{r}\right)^2r^2drd\omega\right)^{\frac12}\\
&\times \left(\int\int\int(t+2+r)^{1+2\delta}(t+2-r)^sF^2r^2drd\omega d\underline{u}\right)^{\frac12}.
\end{aligned}
\end{equation}
We then conclude from \eqref{integrating}-\eqref{I2} that
\begin{equation}
\begin{aligned}\label{semifinal}
&\sup_{t}\int_{\R^3}\Big\{(t+2+r)^sr^2\left(\phi_t+\phi_r+\frac{\phi}{r}\right)^2+(t+2-r)^s\\
&\times\left(\phi_t-\phi_r-\frac{\phi}{r}\right)^2
+\left[(t+2+r)^s+(t+2-r)^s\right]\frac{(\nabla_{S^2}\phi)^2}{r^2}\Big\}dx\\
&\leq C_{\delta}\int\int\int(t+2-r)^{1+2\delta}(t+2+r)^sF^2r^2drd\omega dt\\
&+C_{\delta}\int\int\int(t+2+r)^{1+2\delta}(t+2-r)^sF^2r^2drd\omega dt.
\end{aligned}
\end{equation}
If we choose $\delta$ such that
\[
1+2\delta\leq s,
\]
then it follows from \eqref{semifinal} that
\begin{equation}
\begin{aligned}\label{final}
&\sup_{t}\int_{\R^3}\Big\{(t+2+r)^sr^2\left(\phi_t+\phi_r+\frac{\phi}{r}\right)^2+(t+2-r)^s\\
&\times\left(\phi_t-\phi_r-\frac{\phi}{r}\right)^2
+\left[(t+2+r)^s+(t+2-r)^s\right]\frac{(\nabla_{S^2}\phi)^2}{r^2}\Big\}dx\\
&\leq C_{\delta}\int\int\int(t+2-r)^{1+2\delta}(t+2+r)^sF^2r^2drd\omega dt.\\
\end{aligned}
\end{equation}
Direct computation yields that
\begin{equation}
\begin{aligned}\label{left}
&(t+2+r)^sr^2\left(\phi_t+\phi_r+\frac{\phi}{r}\right)^2+(t+2-r)^s\times\left(\phi_t-\phi_r-\frac{\phi}{r}\right)^2\\
=&\left[(t+2+r)^s+(t+2-r)^s\right]\left[\phi_t^2+\left(\phi_r+\frac{\phi}{r}\right)^2\right]\\
&+2\left[(t+2+r)^s-(t+2-r)^s\right]\phi_t\left(\phi_r+\frac{\phi}{r}\right)\\
=&2(t+2-r)^s\left[\phi_t^2+\left(\phi_r+\frac{\phi}{r}\right)^2\right]\\
&+\left[(t+2+r)^s-(t+2-r)^s\right]
\left(\phi_t+\phi_r+\frac{\phi}{r}\right)^2\\
\geq&2(t+2-r)^s\left[\phi_t^2+\left(\phi_r+\frac{\phi}{r}\right)^2\right].
\end{aligned}
\end{equation}
Then, the inequality \eqref{Weightedinequality} follows by combining \eqref{final}, \eqref{left} and the Hardy type inequalities showed in Lemma \ref{hardy} below. And we finish the proof of Theorem \ref{weighted}.
\end{proof}

\begin{lem}\label{hardy}
Let $\phi$ be the solution of problem \eqref{linearequation}. For $s>0$, we have
\begin{equation}
\begin{aligned}\label{hardy2}
\int_0^{\infty}(t+2-r)^s\phi^2dr\leq C\int_0^{\infty}(t+2-r)^s\left(\phi_r+\frac{\phi}{r}\right)^2r^2dr.
\end{aligned}
\end{equation}
and
\begin{equation}
\begin{aligned}\label{hardy1}
\int_0^{\infty}(t+2-r)^s\phi_r^2r^2dr\leq C\int_0^{\infty}(t+2-r)^s\left(\phi_r+\frac{\phi}{r}\right)^2r^2dr
\end{aligned}
\end{equation}
\end{lem}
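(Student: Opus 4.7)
The key step is the substitution $u = r\phi$, which converts both sides of each inequality into quantities involving only $u$ and $u_r$. Indeed $\phi_r + \phi/r = u_r/r$, so the right-hand side of either inequality becomes $\int_0^\infty (t+2-r)^s u_r^2\,dr$, while the left-hand side of \eqref{hardy2} becomes $\int_0^\infty (t+2-r)^s u^2/r^2\,dr$. Since $\phi$ solves \eqref{linearequation} with zero initial data and $F$ supported in $\{|x|\le t+1\}$, finite propagation speed ensures $\phi(t,\cdot)$ vanishes for $r\ge t+1$, and smoothness at the origin gives $u(t,0)=0$; hence the boundary terms at $r=0$ and $r=\infty$ in the integrations by parts below all vanish, and the weight $(t+2-r)^s$ is smooth and positive on the effective range $[0,t+1]$.

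To prove \eqref{hardy2} I would write $1/r^2 = -(1/r)_r$ and integrate by parts to obtain
\[
\int_0^\infty (t+2-r)^s \frac{u^2}{r^2}\,dr = -s\int_0^\infty (t+2-r)^{s-1}\frac{u^2}{r}\,dr + 2\int_0^\infty (t+2-r)^s \frac{u\,u_r}{r}\,dr.
\]
Since $s>0$ and $t+2-r>0$ on the support of $u$, the first term on the right is non-positive and may be discarded; applying the Cauchy--Schwarz inequality to the remaining cross term then yields \eqref{hardy2} with $C=4$, which is the standard one-dimensional weighted Hardy inequality.

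For \eqref{hardy1} I would expand $r^2\phi_r^2 = (u_r - u/r)^2 = u_r^2 - 2u_r u/r + u^2/r^2$ and treat the cross term by integration by parts, using $2u_r u = (u^2)_r$. The key calculation is that differentiating $(t+2-r)^s/r$ produces a $-(t+2-r)^s/r^2$ piece that exactly cancels the $+u^2/r^2$ coming from the expansion, leaving
\[
\int_0^\infty (t+2-r)^s r^2\phi_r^2\,dr = \int_0^\infty (t+2-r)^s u_r^2\,dr - s\int_0^\infty (t+2-r)^{s-1}\frac{u^2}{r}\,dr,
\]
which immediately gives \eqref{hardy1} with $C=1$ since $s>0$ forces the last term to be non-positive.

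The only point requiring care is the justification of the integrations by parts. I would make these rigorous by working first on $[\varepsilon,R]$ and then passing to the limit, showing that the boundary term $(t+2-r)^s u^2/r$ tends to $0$ as $\varepsilon\to 0^+$ (because $u^2/r = r\phi^2$ and $\phi$ is smooth at the origin) and as $R\to\infty$ (because $u$ has compact support in $r$). Beyond this technical verification, no new idea is needed aside from the substitution $u=r\phi$, which is the key observation making the covariant radial derivative $\phi_r+\phi/r$ behave like the flat derivative $u_r$.
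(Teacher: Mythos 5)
Your proposal is correct, and for the first inequality it is essentially the paper's own argument in different clothing: the substitution $u=r\phi$ turns $\phi_r+\phi/r$ into $u_r/r$, and your integration by parts against $-1/r$ produces exactly the same identity the paper obtains by integrating $(t+2-r)^s\phi^2$ by parts against $r$; in both cases one drops the term $s\int r(t+2-r)^{s-1}\phi^2\,dr$ using $s>0$ together with $t+2-r\ge 1$ on the support $\{r\le t+1\}$, and then applies Cauchy--Schwarz to get \eqref{hardy2} with $C=4$. Where you genuinely diverge is \eqref{hardy1}: the paper deduces it from \eqref{hardy2} via the crude splitting $\phi_r=(\phi_r+\phi/r)-\phi/r$ and the elementary bound $(a-b)^2\le 2a^2+2b^2$ (yielding a constant of order $10$), whereas you prove the exact identity
\begin{equation*}
\int_0^\infty (t+2-r)^s r^2\phi_r^2\,dr=\int_0^\infty (t+2-r)^s\Big(\phi_r+\frac{\phi}{r}\Big)^2 r^2\,dr-s\int_0^\infty (t+2-r)^{s-1}\frac{u^2}{r}\,dr,
\end{equation*}
in which the two $u^2/r^2$ contributions cancel, giving \eqref{hardy1} directly with the sharp constant $C=1$ and without passing through \eqref{hardy2} at all. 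Your treatment of the boundary terms ($u^2/r=r\phi^2\to 0$ at the origin, compact support at infinity) matches what the paper implicitly assumes. The only gain of the paper's route is that it needs a single integration by parts for both inequalities; your route buys a cleaner and quantitatively better statement of \eqref{hardy1}.
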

Hereafter, $C$ denotes a positive constant which varies from line to line.
\begin{proof}
Obviously we have
\begin{equation}\label{1hardy}
\begin{aligned}
&\int_0^{\infty}(t+2-r)^s\phi_r^2r^2dr\\
\leq& 2\int_0^{\infty}(t+2-r)^s\left(\phi_r+\frac{\phi}{r}\right)^2r^2dr+2\int_0^{\infty}(t+2-r)^s\phi^2dr.
\end{aligned}
\end{equation}
By integration by parts, we get
\[
\begin{aligned}
&\int_0^{\infty}(t+2-r)^s\phi^2dr\\
=&-2\int_0^{\infty}r(t+2-r)^s\phi\phi_rdr+s\int_0^{\infty}r(t+2-r)^{s-1}\phi^2dr\\
=&-2\int_0^{\infty}r(t+2-r)^s\phi\left(\phi_r+\frac{\phi}{r}\right)dr+s\int_0^{\infty}r(t+2-r)^{s-1}\phi^2dr\\
&+2\int_0^{\infty}(t+2-r)^{s}\phi^2dr,\\
\end{aligned}
\]
which implies that
\[
\begin{aligned}
&\int_0^{\infty}(t+2-r)^{s}\phi^2dr\\
\leq&2\int_0^{\infty}r(t+2-r)^s\left|\phi\left(\phi_r+\frac{\phi}{r}\right)\right|dr\\
\leq& 2\left(\int_0^{\infty}(t+2-r)^{s}\phi^2dr\right)^{\frac12}\left(\int_0^{\infty}(t+2-r)^s
\left(\phi_r+\frac{\phi}{r}\right)^2dr\right)^{\frac12},\\
\end{aligned}
\]
where we used the fact that $s>0$. Then it follows that
\begin{equation}\label{2hardy}
\begin{aligned}
\int_0^{\infty}(t+2-r)^s\phi^2dr\leq C\int_0^{\infty}(t+2-r)^s\left(\phi_r+\frac{\phi}{r}\right)^2r^2dr,
\end{aligned}
\end{equation}
which is exactly \eqref{hardy2} in Lemma \ref{hardy}. The inequality \eqref{hardy1} follows from \eqref{1hardy} and \eqref{2hardy}.
\end{proof}

\section{Sobolev Embedding}
\par\quad
In this section, we will prove a weighted Sobolev embedding inequality for a function with compact support, by using Hardy type inequality, which is showed in the Lemma \ref{hardy3} below.
\begin{lem}\label{hardy3}
Suppose that a smooth function satisfies
\[
supp~\phi\subset \{|x|\le t+1\},
\]
then for $1<s<2$, we have
\begin{equation}\label{3hardy}
\begin{aligned}
&\left\||t+2-r|^{\frac s2-1}\phi\right\|_{L^2(\R^3)}\\
\leq& C\left(\left\||t+2-r|^{\frac s2}\phi_r\right\|_{L^2(\R^3)}+
\left\||t+2-r|^{\frac s2}\left(\frac{\phi}{r}\right)\right\|_{L^2(\R^3)}\right).
\end{aligned}
\end{equation}
\end{lem}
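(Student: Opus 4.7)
The plan is to reduce to a one-dimensional radial inequality by integrating over $S^2$, then exploit the identity $(t+2-r)^{s-2}=-\frac{1}{s-1}\frac{d}{dr}(t+2-r)^{s-1}$ and integration by parts in $r$. Since we want to bound $\|(t+2-r)^{s/2-1}\phi\|_{L^2(\R^3)}^2$, after passing to polar coordinates and discarding the angular integral (which simply multiplies both sides by the same factor), it is enough to prove, for each fixed direction,
\[
\int_0^{\infty}(t+2-r)^{s-2}\phi^2 r^2\,dr \le C\Big(\int_0^{\infty}(t+2-r)^s\phi_r^2 r^2\,dr+\int_0^{\infty}(t+2-r)^s\phi^2\,dr\Big),
\]
which matches the weighted $L^2(\R^3)$ norms appearing in the statement.

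Next I would write $(t+2-r)^{s-2}=-\frac{1}{s-1}\partial_r(t+2-r)^{s-1}$, which is legitimate because $s>1$, and integrate by parts against $\phi^2 r^2$. The boundary term at $r=0$ vanishes because of the factor $r^2$, and the boundary term at $r=t+1$ vanishes because of the support condition $\mathrm{supp}\,\phi\subset\{|x|\le t+1\}$, so this produces
\[
\int_0^{\infty}(t+2-r)^{s-2}\phi^2 r^2\,dr=\frac{2}{s-1}\int_0^{\infty}(t+2-r)^{s-1}\phi\phi_r r^2\,dr+\frac{2}{s-1}\int_0^{\infty}(t+2-r)^{s-1}\phi^2 r\,dr.
\]

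The remaining step is to distribute the weights so that each term on the right can be handled by Cauchy--Schwarz with the target quantities. For the first term I would split $(t+2-r)^{s-1}r^2=(t+2-r)^{(s-2)/2}r\cdot (t+2-r)^{s/2}r$, obtaining a bound by the geometric mean of $\int(t+2-r)^{s-2}\phi^2 r^2 dr$ and $\int(t+2-r)^s\phi_r^2 r^2 dr$. For the second term I would split $(t+2-r)^{s-1}r=(t+2-r)^{(s-2)/2}r\cdot (t+2-r)^{s/2}$, yielding a bound by the geometric mean of $\int(t+2-r)^{s-2}\phi^2 r^2 dr$ and $\int(t+2-r)^s\phi^2 dr$. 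After these two applications of Cauchy--Schwarz, the factor $\big(\int(t+2-r)^{s-2}\phi^2 r^2 dr\big)^{1/2}$ can be divided out, squaring gives the desired inequality, and reinserting the angular integral finishes the proof.

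The main obstacle is really bookkeeping: one must verify that $s>1$ is genuinely used (to make the integration by parts have a finite coefficient and to ensure $(t+2-r)^{s-1}$ is locally integrable up to $r=t+2$), that $s<2$ is actually not needed here (it enters only in Theorem \ref{weighted}), and that the splitting of weights in the Cauchy--Schwarz step is compatible on both sides. A subtler point is that the right-hand term involving $\int(t+2-r)^{s-1}\phi^2 r\,dr$ cannot be absorbed by pointwise weight comparison alone (the extra factor $r$ rather than $r^2$ would force one to lose a power of $r$ for small $r$); the Cauchy--Schwarz splitting above is precisely what avoids this issue, which is why I single it out as the step requiring care.
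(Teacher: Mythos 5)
Your proposal is correct and follows essentially the same route as the paper: the identity $(t+2-r)^{s-2}=-\frac{1}{s-1}\partial_r(t+2-r)^{s-1}$, integration by parts in $r$, and exactly the same two Cauchy--Schwarz splittings to bound the resulting terms by the geometric means involving $\int(t+2-r)^{s-2}\phi^2r^2\,dr$, after which that factor is divided out. The only cosmetic difference is that you reduce to a fixed direction before reinserting the angular integral, whereas the paper carries the $S^2$ integration throughout; your side remarks about where $s>1$ is used and $s<2$ is not are also accurate.
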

\begin{proof}
The proof of Lemma \ref{hardy3} can be showed by direct calculation, by using integration by parts and H\"{o}lder inequality.
\begin{equation}\nonumber
\begin{aligned}
&\int_{S^2}\int_0^{t+1}(t+2-r)^{s-2}\phi^2r^2dr d\omega\\
=&-\frac{1}{s-1}\int_{S^2}\int_0^{t+1}\phi^2r^2d(t+2-r)^{s-1}d\omega\\
\leq&C\int_{S^2}\int_0^{t+1}\left|(t+2-r)^{s-1}(2\phi\phi_rr^2+2\phi^2r)\right|drd\omega\\
\leq&C\left(\int_{S^2}\int_0^{t+1}(t+2-r)^{s-2}\phi^2r^2dr d\omega\right)^{\frac12}\\
&\times\Bigg\{\left(\int_{S^2}\int_0^{t+1}(t+2-r)^{s}\phi_r^2r^2dr d\omega\right)^{\frac12}\\
&+\left(\int_{S^2}\int_0^{t+1}(t+2-r)^{s}\left(\frac{\phi}{r}\right)^2r^2dr d\omega\right)^{\frac12}\Bigg\},
\end{aligned}
\end{equation}
and then \eqref{3hardy} follows.
\end{proof}

With this Hardy type inequality in hand, it is easy to get
\begin{thm}\label{sobolev}
Suppose that a smooth function satisfies
\[
supp~\phi\subset \{|x|\le t+1\},
\]
then for $1<s<2$, we have
\begin{equation}\label{sobolevinq}
\begin{aligned}
&\left\|r^{\frac12}(t+2+r)^{\frac12}(t+2-r)^{\frac{s-1}{2}}\phi\right\|_{L_r^{\infty}L^2(S^2)}\\
\leq&C\left(\left\|(t+2-r)^{\frac s2}\phi_r\right\|_{L^2(\R^3)}+\left\|(t+2-r)^{\frac s2}\left(\frac{\phi}{r}\right)\right\|_{L^2(\R^3)}\right).
\end{aligned}
\end{equation}
\end{thm}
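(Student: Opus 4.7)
The plan is to prove the pointwise-in-$r$ inequality
\[
\int_{S^2} r(t+2+r)(t+2-r)^{s-1}\phi^2(r,\omega)\,d\omega \le C\left(\left\|(t+2-r)^{s/2}\phi_r\right\|_{L^2(\R^3)}^2 + \left\|(t+2-r)^{s/2}\frac{\phi}{r}\right\|_{L^2(\R^3)}^2\right)
\]
for every $r\in[0,t+1]$, and then take the supremum in $r$ and extract a square root. Setting $g(\rho):=\rho(t+2+\rho)(t+2-\rho)^{s-1}$, the vanishing $g(0)=0$ together with the fundamental theorem of calculus, integrated over $S^2$, yields
\[
\int_{S^2} g(r)\phi^2(r,\omega)\,d\omega = \int_0^r\!\!\int_{S^2}\bigl[g'(\rho)\phi^2 + 2g(\rho)\phi\phi_\rho\bigr]\,d\omega\,d\rho,
\]
so it suffices to control the two resulting summands by the target right-hand side.

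For the $g'\phi^2$-piece, I would use the identities $t+2+2\rho=(t+2-\rho)+3\rho$ and $t+2+\rho=(t+2-\rho)+2\rho$ and take absolute values to obtain $|g'(\rho)|\le (t+2-\rho)^s+C\rho(t+2-\rho)^{s-1}+C\rho^2(t+2-\rho)^{s-2}$. The first term integrated against $\phi^2$ is exactly $\|(t+2-r)^{s/2}(\phi/r)\|_{L^2(\R^3)}^2$; the third is absorbed by Lemma~\ref{hardy3}; and the middle one is reduced to the previous two via the AM--GM bound $2\rho(t+2-\rho)^{s-1}\le (t+2-\rho)^s+\rho^2(t+2-\rho)^{s-2}$. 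For the cross term $2g(\rho)\phi\phi_\rho$, I first rewrite $g(\rho)=\rho(t+2-\rho)^s+2\rho^2(t+2-\rho)^{s-1}$ and then apply weighted AM--GM to each piece: the inequalities $2\rho(t+2-\rho)^s|\phi\phi_\rho|\le (t+2-\rho)^s\phi^2+\rho^2(t+2-\rho)^s\phi_\rho^2$ and $2\rho^2(t+2-\rho)^{s-1}|\phi\phi_\rho|\le \rho^2(t+2-\rho)^{s-2}\phi^2+\rho^2(t+2-\rho)^s\phi_\rho^2$ leave only terms that are either directly in the RHS or handled once more by Lemma~\ref{hardy3}.

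The main obstacle is a one-power weight mismatch: the LHS carries $(t+2+r)(t+2-r)^{s-1}$ while the RHS only supplies $(t+2-\rho)^s$. This gap can be closed only by simultaneously exploiting two ingredients. First, the compact support condition $|x|\le t+1$ forces $t+2-\rho\ge 1$, and together with the identity $t+2=(t+2-\rho)+\rho$ it allows every excess factor of $t+2$ to be converted into an admissible combination of $(t+2-\rho)^s$, $\rho(t+2-\rho)^{s-1}$, and $\rho^2(t+2-\rho)^{s-2}$. Second, Lemma~\ref{hardy3} precisely bridges the remaining derivative-versus-potential gap by dominating the $\rho^2(t+2-\rho)^{s-2}\phi^2$ integrals by the RHS. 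Once these steps are combined, taking the supremum in $r$ and the square root of both sides produces \eqref{sobolevinq}.
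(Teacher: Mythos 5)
Your proposal is correct. With $g(\rho)=\rho(t+2+\rho)(t+2-\rho)^{s-1}$ one has $g'(\rho)=(t+2+2\rho)(t+2-\rho)^{s-1}-(s-1)\rho(t+2+\rho)(t+2-\rho)^{s-2}$, and your algebraic substitutions together with AM--GM do reduce every term to one of the three admissible integrands $(t+2-\rho)^{s}\phi^{2}$, $\rho^{2}(t+2-\rho)^{s}\phi_{\rho}^{2}$ and $\rho^{2}(t+2-\rho)^{s-2}\phi^{2}$, the last of which is exactly what Lemma~\ref{hardy3} controls; note only that the first piece is \emph{bounded by}, rather than equal to, $\|(t+2-r)^{s/2}(\phi/r)\|_{L^2(\R^3)}^{2}$, since $\rho$ runs over $[0,r]$ only. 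The underlying mechanism is the same as the paper's (a radial fundamental-theorem-of-calculus identity for the weighted spherical $L^{2}$ norm, followed by Cauchy--Schwarz or AM--GM and then Lemma~\ref{hardy3}), but the execution differs in two respects. The paper splits into the cases $t\geq 3r-2$ and $t\leq 3r-2$, replacing the factor $t+2+r$ by $2(t+2-r)$ or by $4r$ respectively so that only a pure power of $(t+2-r)$ remains, and in each case integrates \emph{outward} from $r$ to $\infty$, using the support condition to kill the boundary term; the cross terms are then handled by H\"older, producing products of norms. You instead keep the full weight in the single function $g$, integrate \emph{inward} from $0$ to $r$ using $g(0)=0$, and use AM--GM to produce sums of squares. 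Your route avoids the case distinction at the cost of slightly longer bookkeeping in $g'$; both arguments are equally elementary and both lean on Lemma~\ref{hardy3} at exactly the same point, namely to absorb the $(t+2-\rho)^{s-2}$ terms.
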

\begin{proof}
We will divide the proof into two cases.\\
\textbf{Case 1: $t\geq 3r-2$.} In this case, we have
\[
t+2+r\leq 2(t+2-r).
\]
Then for \eqref{sobolevinq}, we only need to prove
\begin{equation}\label{sobolevinq1}
\begin{aligned}
&\left\|r^{\frac12}(t+2-r)^{\frac{s}{2}}\phi\right\|_{L_r^{\infty}L^2(S^2)}\\
\leq&C\left(\left\|(t+2-r)^{\frac s2}\phi_r\right\|_{L^2(\R^3)}+\left\|(t+2-r)^{\frac s2}\left(\frac{\phi}{r}\right)\right\|_{L^2(\R^3)}\right).
\end{aligned}
\end{equation}
Actually, by H\"{o}lder inequality, we have
\begin{equation}\label{ibp}
\begin{aligned}
&r(t+2-r)^{s}\int_{S^2}\phi^2(r\omega)d\omega\\
=&-r\int_{S^2}\int_r^{\infty}\partial_{\lambda}\left[(t+2-\lambda)^s\phi^2(\lambda\omega)\right]d\lambda d\omega\\
=&-2r\int_{S^2}\int_r^{\infty}(t+2-\lambda)^s\phi\phi_{\lambda}d\lambda d\omega\\
&+r\int_{S^2}\int_r^{\infty}s(t+2-\lambda)^{s-1}\phi^2d\lambda d\omega\\
\leq&C\left(\int_{\R^3}\frac{(t+2-r)^s}{r}\phi\phi_rdx+\int_{\R^3}\frac{(t+2-r)^{s-1}}{r}\phi^2dx\right)\\
\leq&C\left\|(t+2-r)^{\frac{s}{2}}\left(\frac{\phi}{r}\right)\right\|_{L^2(\R^3)}\\
&\times\left(\left\|(t+2-r)^{\frac{s}{2}}\phi_r\right\|_{L^2(\R^3)}+\left\|(t+2-r)^{\frac{s}{2}-1}\phi\right\|_{L^2(\R^3)}\right),\\
\end{aligned}
\end{equation}
and hence \eqref{sobolevinq1} follows from \eqref{3hardy} and \eqref{ibp}.\\
\textbf{Case 2: $t\leq 3r-2$.} In this case, we have
\[
t+2+r\leq 4r.
\]
Then for \eqref{sobolevinq}, it is sufficient to prove
\begin{equation}\label{sobolevinq2}
\begin{aligned}
&\left\|r(t+2-r)^{\frac{s-1}{2}}\phi\right\|_{L_r^{\infty}L^2(S^2)}\\
\leq&C\left(\left\|(t+2-r)^{\frac s2}\phi_r\right\|_{L^2(\R^3)}+\left\|(t+2-r)^{\frac s2}\left(\frac{\phi}{r}\right)\right\|_{L^2(\R^3)}\right).
\end{aligned}
\end{equation}
In a similar way as \eqref{ibp}, we have
\begin{equation}\label{ibp1}
\begin{aligned}
&r^2(t+2-r)^{s-1}\int_{S^2}\phi^2(r\omega)d\omega\\
=&-r^2\int_{S^2}\int_r^{\infty}\partial_{\lambda}\left[(t+2-\lambda)^{s-1}\phi^2(\lambda\omega)\right]d\lambda d\omega\\
=&-2r^2\int_{S^2}\int_r^{\infty}(t+2-\lambda)^{s-1}\phi\phi_{\lambda}d\lambda d\omega\\
&+r^2\int_{S^2}\int_r^{\infty}(s-1)(t+2-\lambda)^{s-2}\phi^2d\lambda d\omega\\
\leq&C\left\|(t+2-r)^{\frac{s}{2}}\phi_r\right\|_{L^2(\R^3)}\left\|(t+2-r)^{\frac{s}{2}-1}\phi\right\|_{L^2(\R^3)}\\
&+C\left\|(t+2-r)^{\frac{s}{2}-1}\phi\right\|_{L^2(\R^3)}^2.\\
\end{aligned}
\end{equation}
Again, \eqref{sobolevinq2} comes from \eqref{3hardy} and \eqref{ibp1}. Finally, we prove the desired inequality \eqref{sobolevinq}
by combining \eqref{sobolevinq1} and \eqref{sobolevinq2}.
\end{proof}
\section{SDGE for Semilinear Wave Equation in 3-D}
\par\quad
In this section, we are concerning the global existence for the following Cauchy problem for $1+\sqrt2<p<3$
\begin{equation}
\label{SDSemi3D}
\left\{
\begin{array}{l}
\d \partial_{t}^2\phi-\Delta \phi=|\phi|^p,
\quad (t, x)~ \mbox{in}\ ~\R_{+}\times \R^3,\\
\phi(0, x)=\e f(x),~~\phi_t(0, x)=\e g(x), \quad x\in\R^3,
\end{array}
\right.
\end{equation}
where $f, g\in C_0^{\infty}(\R^3)$ with compacted support
\[
supp~f, g\subset \{x: |x|\le 1\}.\\
\]

By combining the weighted $L^2-L^2$ estimate \eqref{Weightedinequality} in Theorem \ref{weighted} and inequality \eqref{sobolevinq} in Theorem \ref{sobolev}, and taking two spherical derivatives to the equation, we have for $1<s<2$
\begin{equation}\label{weighted2}
\begin{aligned}
&\left\|r^{\frac12}(t+2+r)^{\frac12}(t+2-r)^{\frac{s-1}{2}}\phi\right\|_{L_t^{\infty}L_r^{\infty}H^2(S^2)}\\
\leq&C\e+C_{\delta}\left\|(t+2+r)^{\frac s2}(t+r-2)^{\frac12+\delta}|\phi|^p\right\|_{L_t^2L_r^2H^2(S^2)}\\
\leq&C\e+C_{\delta}\left\|(t+2+r)^{\frac s2}(t+r-2)^{\frac12+\delta}\|\phi\|_{L^{\infty}(S^2)}^{p-1}
\|\phi\|_{H^2(S^2)}\right\|_{L_t^2L_r^2}.\\
\end{aligned}
\end{equation}
Here and afterwards, the norm $\|h(r)\|_{L_r^q}$ for $2\leq q<\infty$ denotes $\int \left|h(r)\right| r^2dr$. Taking $\frac{s-1}{2}=\frac 1p$, and by using the Sobolev embedding theorem on $S^2$
\[
\|\phi\|_{L^{\infty}(S^2)}\leq C\|\phi\|_{H^{2}(S^2)},
\]
it follows from \eqref{weighted2} that
\begin{equation}\label{weighted3}
\begin{aligned}
&\left\|r^{\frac12}(t+2+r)^{\frac12}(t+2-r)^{\frac{1}{p}}\phi\right\|_{L_t^{\infty}L_r^{\infty}H^2(S^2)}\\
\leq&C\e+C\left\|r^{\frac12}(t+2+r)^{\frac12}(t+2-r)^{\frac{1}{p}}\phi\right\|_{L_t^{\infty}L_r^{\infty}H^2(S^2)}^p\\
&\times \left\|(t+2+r)^{\frac12+\frac1p-\frac p2}(t+2-r)^{-\frac12+\delta}r^{-\frac p2}\right\|_{L_t^2L_r^2}.\\
\end{aligned}
\end{equation}
Let
\[
M\triangleq \left\|(t+2+r)^{\frac12+\frac1p-\frac p2}(t+2-r)^{-\frac12+\delta}r^{-\frac p2}\right\|_{L_t^2L_r^2},
\]
then direct computation implies that
\begin{equation}\label{M}
\begin{aligned}
M^2&=\int_0^{\infty}\int_0^{t+1}(t+2+r)^{1+\frac 2p-p}(t+2-r)^{-1+2\delta}r^{2-p}drdt\\
&\leq C\int_0^{\infty}(t+2)^{1+\frac2p-p}\left(\int_0^{t+1}(t+2-r)^{-1+2\delta}r^{2-p}dr\right)dt\\
&\leq C\int_0^{\infty}(t+2)^{1+\frac2p-p+2-p+2\delta}dt\\
&=C\int_0^{\infty}(t+2)^{-1+2\delta+\frac{2+4p-2p^2}{p}}dt.\\
\end{aligned}
\end{equation}
Since
\[
1+\sqrt2<p<3,
\]
which implies
\[
2+4p-2p^2<0,
\]
and hence we may choose $\delta$ small enough such that
\[
2\delta+\frac{2+4p-2p^2}{p}<0,
\]
which in turn yields that
\[
M\leq C.
\]
We conclude from \eqref{weighted3} that
\begin{equation}\label{iteration}
\begin{aligned}
&\left\|r^{\frac12}(t+2+r)^{\frac12}(t+2-r)^{\frac{1}{p}}\phi\right\|_{L_t^{\infty}L_r^{\infty}H^2(S^2)}\\
\leq&C\e+C\left\|r^{\frac12}(t+2+r)^{\frac12}(t+2-r)^{\frac{1}{p}}\phi\right\|_{L_t^{\infty}L_r^{\infty}H^2(S^2)}^p,\\
\end{aligned}
\end{equation}
then the global existence can be obtained by an iteration argument in a similar way as that in Lemma 1.3 in \cite{Georgive}.

\section{SDGE for Damped Semilinear Wave Equation in 3-D}
\par\quad
Finally, we consider the small data Cauchy problem for the semilinear wave equation with scale invariant damping in 3-D
\begin{equation}
\label{SDDSemi3D}
\left\{
\begin{array}{l}
\d \partial_{t}^2\Phi-\Delta \Phi+\frac{2}{1+t}\Phi_t=|\Phi|^p,
\quad (t, x)~ \mbox{in}\ ~\R_{+}\times \R^3,\\
\Phi(0, x)=\e f(x),~~\Phi_t(0, x)=\e g(x), \quad x\in\R^3,
\end{array}
\right.
\end{equation}
where $f, g\in C_0^{\infty}(\R^3)$ with compacted support
\[
supp~f, g\subset \{x: |x|\le 1\}.\\
\]
As mentioned in the introduction, by the transformation
\[
\phi(t, x):=(1+t)\Phi(t, x),
\]
problem \eqref{SDDSemi3D} can be rewritten as
\begin{equation}
\label{ChangedSDDSemi3D}
\left\{
\begin{array}{l}
\d \partial_{t}^2\phi-\Delta \phi=\frac{|\phi|^p}{(1+t)^{p-1}},
\quad (t, x)~ \mbox{in}\ ~\R_{+}\times \R^3,\\
\phi(0, x)=\e f(x),~~\phi_t(0, x)=\e \left\{f(x)+g(x)\right\}, \quad x\in\R^3.
\end{array}
\right.
\end{equation}

Next we will show that problem \eqref{ChangedSDDSemi3D}, or equivalently, \eqref{SDDSemi3D} has global solution for $p_c(5)<p<3$.
Since $p_c(5)<2$, we cannot take spherical derivatives twice to the equation due to the nonlinear term, hence in this case, we have to do more delicate analysis compared to the semilinear wave equation. On one hand, noting the Sobolev embedding
\[
H^1(S^2)\hookrightarrow L^q(S^2),~~~for~2\le q< \infty,
\]
which implies by combining the weighted $L^2-L^2$ estimate \eqref{Weightedinequality} in Theorem \ref{weighted}
\begin{equation}\label{Lq}
\begin{aligned}
&\left\|(t+2-r)^{\frac s2}\frac {\phi} {r}\right\|_{L_t^\infty L_r^2L^q(S^2)}\\
\le& \left\|(t+2-r)^{\frac s2}\frac \phi r\right\|_{L_t^\infty L_r^2L^2(S^2)}\\
&+\left\|(t+2-r)^{\frac s2}\frac {\nabla_{S^2}\phi} {r}\right\|_{L_t^\infty L_r^2L^2(S^2)}\\
\le& C \left\|(t+2+r)^{\frac s2}(t+2-r)^{\frac 12+\delta}F\right\|_{L^2(\R_+\times\R^3)}.
\end{aligned}
\end{equation}
On the other hand, by \eqref{sobolevinq} and \eqref{Weightedinequality}, we have
\begin{equation}\label{sobolevinq3}
\begin{aligned}
&\left\|r^{\frac12}(t+2+r)^{\frac12}(t+2-r)^{\frac{s-1}{2}}\phi\right\|_{L_t^\infty L_r^{\infty}L^2(S^2)}\\
\leq&C_\delta \left\|(t+2+r)^{\frac s2}(t+2-r)^{\frac 12+\delta}F\right\|_{L^2(\R_+\times\R^3)}.
\end{aligned}
\end{equation}
Then making interpolation between \eqref{Lq} and \eqref{sobolevinq3} with
\begin{equation}\label{intepower}
\begin{aligned}
\frac{1}{\sigma}=\frac{\theta}{2}+\frac{1-\theta}{\infty}, \frac{1}{\beta}=\frac{\theta}{q}+\frac{1-\theta}{2}
\end{aligned}
\end{equation}
and
\[
0<\theta\ll 1,
\]
we come to
\begin{equation}\label{sobolevinq5}
\begin{aligned}
&\left\|r^{\frac12-\frac{3\theta}{2}}(t+2+r)^{\frac{1-\theta}{2}}(t+2-r)^{\frac{s-1+\theta}{2}}\phi\right\|_{L_t^\infty L_r^{\sigma}L^{\beta}(S^2)}\\
\leq&C \left\|(t+2+r)^{\frac s2}(t+2-r)^{\frac 12+\delta}F\right\|_{L^2(\R_+\times\R^3)}.
\end{aligned}
\end{equation}

In order to prove global existence for problem \eqref{ChangedSDDSemi3D} with $p>p_c(5)$ by applying the above inequality, we also have to use the strong Huygen's principle for wave equations in 3-D. The solution for \eqref{linearequation} can be rewritten as
\begin{equation}\label{huygence}
\begin{aligned}
\phi(t, x)=C\int_0^t\int_{\R^3}\frac{\delta(t-\tau-|x-y|)}{t-\tau}F(\tau, y)dyd\tau,
\end{aligned}
\end{equation}
where $\delta(\cdot)$ denotes the Dirac delta function. On the characteristic cone$(t-\tau=|x-y|)$, it is easy to get
\[
t+2-|x|=\tau+|x-y|+2-|x|\leq 2+\tau+|y|,\\
\]
which yields for $\alpha>0$ by combining \eqref{huygence}
\begin{equation}\label{huygence1}
\begin{aligned}
(t+2-r)^{\alpha}\phi\leq C\int_0^t\int_{\R^3}\frac{\delta(t-\tau-|x-y|)}{t-\tau}(\tau+2+y)^{\alpha}|F(\tau, y)|dyd\tau.\\
\end{aligned}
\end{equation}
It holds that by combining \eqref{sobolevinq5}
\begin{equation}\label{weighted5}
\begin{aligned}
&\left\|r^{\frac12-\frac{3\theta}{2}}(t+2+r)^{\frac{1-\theta}{2}}(t+2-r)^{\frac{s-1+\theta}{2}+\alpha}\phi\right\|_{L_t^\infty L_r^{\sigma}L^{\beta}(S^2)}\\
\leq&C\Big\|r^{\frac12-\frac{3\theta}{2}}(t+2+r)^{\frac{1-\theta}{2}}(t+2-r)^{\frac{s-1+\theta}{2}}\\
&\times\int_0^t\int_{\R^3}
\frac{\delta(t-\tau-|x-y|)}{t-\tau}(\tau+2+y)^{\alpha}|F(\tau, y)|dyd\tau\Big\|_{L_t^\infty L_r^{\sigma}L^{\beta}(S^2)}\\
\leq&C \left\|(t+2+r)^{\frac s2+\alpha}(t+r-2)^{\frac12+\delta}|F|\right\|_{L^2(\R_+\times\R^3)}.\\
\end{aligned}
\end{equation}
In the same way as for the 3-D semilinear wave equation, applying \eqref{weighted5} to the Cauchy problem \eqref{ChangedSDDSemi3D}
after taking one spherical derivative,
we get
\begin{equation}\label{weighted6}
\begin{aligned}
&\left\|r^{\frac12-\frac{3\theta}{2}}(t+2+r)^{\frac{1-\theta}{2}}(t+2-r)^{\frac{s-1+\theta}{2}+\alpha}\phi\right\|
_{L_t^{\infty}L_r^{\sigma}W^{1, \beta}(S^2)}\\
\leq&C\e+C\left\|(t+2+r)^{\frac s2+\alpha}(t+2-r)^{\frac12+\delta}(1+t)^{-(p-1)}|\phi|^p\right\|_{L_t^{2}L_r^{2}H^1(S^2)}\\
\leq&C\e+C\left\|(t+2+r)^{\frac s2+\alpha-p+1}(t+2-r)^{\frac12+\delta}\|\phi\|_{L^{\infty}(S^2)}^{p-1}\|\phi\|_{H^1(S^2)}
\right\|_{L_t^{2}L_r^{2}}.\\
\end{aligned}
\end{equation}
Sine $q\geq 2$ and $0<\theta\ll 1$, we may get from \eqref{intepower} that $\beta>2$. Then Sobolev embedding
\[
W^{1, \beta}(S^2)\hookrightarrow L^{\infty}(S^2)
\]
and H\"{o}lder inequality yield that
\begin{equation}\label{weighted7}
\begin{aligned}
&\left\|r^{\frac12-\frac{3\theta}{2}}(t+2+r)^{\frac{1-\theta}{2}}(t+2-r)^{\frac{s-1+\theta}{2}+\alpha}\phi\right\|
_{L_t^{\infty}L_r^{\sigma}W^{1, \beta}(S^2)}\\
\leq&C\e+C\left\|(t+2+r)^{\frac s2+\alpha-p+1}(t+2-r)^{\frac12+\delta}\|\phi\|_{W^{1, \beta}(S^2)}^p
\right\|_{L_t^{2}L_r^{2}}\\
\leq& C\e+\left\|r^{\frac12-\frac{3\theta}{2}}(t+2+r)^{\frac{1-\theta}{2}}(t+2-r)^{\frac{s-1+\theta}{2}+\alpha}\phi\right\|
_{L_t^{\infty}L_r^{\sigma}W^{1, \beta}(S^2)}^p\\
&\times \left\|r^{-\left(\frac12-\frac{3\theta}{2}p\right)}(t+2+r)^{\frac{p(1-\theta)}{2}}(t+2-r)^{\frac12+\delta
-\left(\frac s2-\frac{1-\theta}{2}+\alpha\right)p}\right\|_{L_t^2L_r^{\frac{2\sigma}{\sigma-2p}}}.\\
\end{aligned}
\end{equation}
Let
\[
N\triangleq \left\|r^{-\left(\frac12-\frac{3\theta}{2}p\right)}(t+2+r)^{\frac{p(1-\theta)}{2}}(t+2-r)^{\frac12+\delta
-\left(\frac s2-\frac{1-\theta}{2}+\alpha\right)p}\right\|_{L_t^2L_r^{\frac{2\sigma}{\sigma-2p}}},
\]
and
\[
s=2-\frac1p,~\alpha=\frac{3}{2p}-\frac12,
\]
then direct computation yields that for $p<3$
\begin{equation}\label{N}
\begin{aligned}
N^2=&\int_0^{\infty}\Bigg(\int_0^{t+1}(t+2+r)^{\left(\frac32+\frac 1p-\frac{3p}{2}-\frac{p\theta}{2}\right)\frac{2}{1-p\theta}}\\
&\times(t+2-r)^{\left(-1+2\delta-\frac{p\theta}{2}\right)\frac{2}{1-p\theta}}
r^{2-\left(\frac12-\frac{3\theta}{2}\frac{2p}{1-p\theta}\right)}dr\Bigg)^{1-p\theta}dt\\
\leq& C\int_0^{\infty}(t+2)^{3+\frac2p-3p-p\theta}\left(\int_0^{t+1}(t+2-r)^{-1+\frac{2\delta-2p\theta}{1-p\theta}}r^{2-\frac{p-3p\theta}
{1-p\theta}}dr\right)^{1-p\theta}dt\\
\leq& C\int_0^{\infty}(t+2)^{3+\frac2p-3p+2-p+2\delta}dt\\
=&C\int_0^{\infty}(t+2)^{-1+2\delta-2p\theta+\frac{2+6p-4p^2}{p}}dt.\\
\end{aligned}
\end{equation}
Again, since
\[
p_c(5)<p<3,
\]
which implies
\[
2+6p-4p^2<0,
\]
and hence we may choose $\delta$ and $\theta$ small enough such that
\[
2\delta-2p\theta+\frac{2+6p-4p^2}{p}<0,
\]
which in turn yields that
\[
N\leq C.
\]
We conclude from \eqref{weighted6} that
\begin{equation}\label{iteration2}
\begin{aligned}
&\left\|r^{\frac12-\frac{3\theta}{2}}(t+2+r)^{\frac{1-\theta}{2}}(t+2-r)^{\frac{s-1+\theta}{2}+\alpha}\phi\right\|
_{L_t^{\infty}L_r^{\sigma}W^{1, \beta}(S^2)}\\
\leq& C\e+C\left\|r^{\frac12-\frac{3\theta}{2}}(t+2+r)^{\frac{1-\theta}{2}}(t+2-r)^{\frac{s-1+\theta}{2}+\alpha}\phi\right\|
_{L_t^{\infty}L_r^{\sigma}W^{1, \beta}(S^2)}^p,\\
\end{aligned}
\end{equation}
then the global existence can be obtained by an iteration argument in a similar way as that in Lemma 1.3 in \cite{Georgive}.

\begin{rem}
The restriction for $p<3$ comes from the fact $\alpha=\frac{3}{2p}-\frac12>0$.
\end{rem}

\section*{Acknowledgment}
\par\quad
The author would like to express his sincere thank to Prof. Yi Zhou for the helpful discussion and comments.

The author is supported by Natural Science Foundation of Zhejiang Province(LY18A010008), NSFC(11501273, 11726612, 11771359,
11771194), China Postdoctoral Science Foundation(2017M620128, 2018T110332), the Scientific Research Foundation of the First-Class Discipline of Zhejiang Province
(B)(201601).

\section*{References}
\bibliographystyle{plain}

\end{document}